\newcommand{\Winc}{W_{\textnormal{inc}}}
\newcommand{\Wiso}{W_{\textnormal{iso}}}
\newcommand{\Wvol}{W_{\textnormal{vol}}}
\newcommand{\GLpz}{\GLp(2)}
\newcommand{\lambdamax}{\lambda_{\mathrm{max}}}
\newcommand{\lambdamin}{\lambda_{\mathrm{min}}}
\begin{document}
\title{Rank-one convexity implies polyconvexity in isotropic planar incompressible elasticity}

\author{%
	Ionel-Dumitrel Ghiba\thanks{%
		Corresponding author: Ionel-Dumitrel Ghiba,\quad Lehrstuhl f\"{u}r Nichtlineare Analysis und Modellierung, Fakult\"{a}t f\"{u}r Mathematik,
		Universit\"{a}t Duisburg-Essen, Thea-Leymann Str. 9, 45127 Essen, Germany; Alexandru Ioan Cuza University of Ia\c si, Department of Mathematics, Blvd.~Carol I, no.~11, 700506 Ia\c si, Romania; and Octav Mayer Institute of Mathematics of the Romanian Academy, Ia\c si Branch, 700505 Ia\c si, email: dumitrel.ghiba@uni-due.de, dumitrel.ghiba@uaic.ro}%
	\ \quad and\quad%
	Robert J.\ Martin\thanks{%
		Robert J.\ Martin,\quad Lehrstuhl f\"{u}r Nichtlineare Analysis und Modellierung, Fakult\"{a}t f\"{u}r Mathematik, Universit\"{a}t Duisburg-Essen, Thea-Leymann Str. 9, 45127 Essen, Germany; email: robert.martin@uni-due.de}%
	\ \quad and\quad%
	Patrizio Neff\thanks{%
		Patrizio Neff,\quad Head of Lehrstuhl f\"{u}r Nichtlineare Analysis und Modellierung, Fakult\"{a}t f\"{u}r	Mathematik, Universit\"{a}t Duisburg-Essen, Thea-Leymann Str. 9, 45127 Essen, Germany, email: patrizio.neff@uni-due.de}%
}
\date{\today\vspace*{-1em}}
\maketitle

\begin{abstract}
We study convexity properties of energy functions in plane nonlinear elasticity of incompressible materials and show that rank-one convexity of an objective and isotropic elastic energy $W$ on the special linear group $\SL(2)$ implies the polyconvexity of $W$.
\\[1.4em]
\textbf{Mathematics Subject Classification}: 74B20, 74G65, 26B25
\\[1.4em]
\textbf{Key words}: rank-one convexity, polyconvexity, quasiconvexity, incompressible materials, nonlinear elasticity, Morrey's conjecture, volumetric-isochoric split, calculus of variations
\end{abstract}
\tableofcontents
\newpage
\section{Introduction}
The aim of this paper is to study the relation between rank-one convexity and polyconvexity of \emph{objective} and \emph{isotropic} real valued functions $W$ on $\SL(2)= \{X\in\mathbb{R}^{2\times 2} \,\setvert\, \det X =1 \}$. These convexity properties play an important role in the theory of nonlinear hyperelasticity, where $W(\nabla\varphi)$ is interpreted as the energy density of a deformation $\varphi\colon\Omega\to\mathbb{R}^2$; here, $\Omega\subset\mathbb{R}^2$ corresponds to a planar elastic body in its reference configuration. In particular, energy functions on the domain $\SL(2)$ are used for modelling \emph{incompressible} materials, since in this case, the deformation $\varphi$ is subject to the additional constraint $\det\nabla\varphi=1$.%
\footnote{%
	Note that a function $W$ defined only on $\SL(2)$ can equivalently be expressed as a (discontinuous) function $W\colon\mathbb{R}^{2\times2}\to\mathbb{R}\cup\{+\infty\}$ with $W(F)=+\infty$ for all $F\notin\SL(2)$. This interpretation of functions not defined on all of $\R^{2\times2}$ is reflected by Mielke's definition of polyconvexity \cite{Mielke05JC} of energies $W$ on $\SL(2)$, see Definition \ref{definition:polyconvexityGLSL}.%
}

The notion of polyconvexity was introduced into the context of nonlinear elasticity theory by John Ball \cite{Ball77,Ball78} (cf.\ \cite{Raoult86,Dacorogna08,cism_book_schroeder_neff09}). Polyconvexity criteria in the case of spatial dimension $2$ were conclusively discussed by Rosakis \cite{Rosakis98} and \v{S}ilhav\'{y} \cite{Silhavy97,Silhavy99b,SilhavyPRE99,silhavy2001rank,silhavy2002monotonicity,Silhavy02b,Silhavy03}, while an exhaustive self-contained study giving necessary and sufficient conditions for polyconvexity in arbitrary spatial dimension was given by Mielke \cite{Mielke05JC}.
Rank-one convexity plays an important role in the existence and uniqueness theory for linear elastostatics and elastodynamics \cite{Ogden83,fosdick2007note,edelstein1968note,ernst1998ellipticity,knowles1976failure}. Criteria for the rank-one convexity of functions defined on $\GLp(2)=\{X\in\mathbb{R}^{2\times 2} \setvert \det X > 0\}$ were established by Knowles and Sternberg \cite{knowles1975ellipticity} as well as by \v{S}ilhav\'y \cite{SilhavyPRE99,vsilhavy2002convexity}, Dacorogna \cite{Dacorogna01}, Aubert \cite{aubert1995necessary} and Davies \cite{davies1991simple}.

It is well known that the implications 
\begin{align*}
	\text{polyconvexity} \quad\Longrightarrow\quad \text{quasiconvexity} \quad\Longrightarrow\quad \text{rank-one convexity}
\end{align*}
hold for functions on $\Rnn$ (as well as for functions on $\SL(n)$, see \cite[Theorem 1.1]{conti2008quasiconvex}) for arbitrary dimension $n$.
The reverse implications, on the other hand, do not hold in general: rank-one convexity does not imply polyconvexity \cite{alibert1992example} for dimension $n\geq2$, and rank-one convexity does not imply quasiconvexity \cite{Ball84b,Sverak92,Sverak98,Dacorogna08} for $n>2$. Whether this latter implication holds for $n=2$ is still an open question: the conjecture that rank-one convexity and quasiconvexity are \emph{not} equivalent for $n=2$ is also called \emph{Morrey's conjecture} \cite{morrey1952quasi}. For certain classes of functions on on $\R^{2\times2}$, however, it has be demonstrated that the two convexity properties are, in fact, equivalent \cite{terpstra1939darstellung,serre1983formes,marcellini1984quasiconvex,Sverak92,muller1999rank,chaudhuri2003rank,Ball84b,pedregal2014some,pedregal1996some}.

In a previous paper \cite{agn_martin2015rank}, we have shown that any energy function $W\colon\GLp(2)\to\mathbb{R}$ which is isotropic and objective (i.e.\ bi-$\OO(2)$-invariant) as well as \emph{isochoric}%
\footnote{%
	A function $W\colon\GLp(2)\to\mathbb{R}$ is called isochoric if $W(a\,F)=W(F)$ for all $a\in\mathbb{R}^+\colonequals(0,\infty)$.
	Some relations between isotropic, objective and isochoric energies and the functions defined on $\SL(2)$ are discussed in Section \ref{sectisochinc}. In elasticity theory, isochoric energy functions measure only the \emph{change of form} of an elastic body, not the \emph{change of size}.%
}
is rank-one convex if and only if it is polyconvex. In January 2016, a question by John Ball motivated some investigation into whether this result might be applicable to the incompressible case. In March 2016, at the Joint DMV and GAMM Annual Meeting in Braunschweig, Alexander Mielke indicated that some of his results \cite{Mielke05JC} should be suitable for this task.

The main result of the present paper is Theorem \ref{mainresultSL2}, which states that for objective and isotropic energies on $\SL(2)$, rank-one convexity implies (and is therefore equivalent to) polyconvexity.
Theorem \ref{mainresultSL2} includes a slightly stronger two-dimensional version of a criterion by Dunn, Fosdick and Zhang (cf.\ Section \ref{section:criteria}): an energy $W$ with $W(F)=\phi(\sqrt{\norm{F}-2})$ for $F\in \SL(2)$ is polyconvex on $\SL(2)$ (if and only if it is rank one convex) if and only if $\phi$ is nondecreasing and convex, regardless of any regularity assumption on the energy.%
\footnote{%
	Throughout this article, $\norm{X}^2=\langle {X},{X}\rangle$ denotes the Frobenius tensor norm of $X\in\Rnn$, where $\langle {X},{Y}\rangle=\tr(Y^TX)$ is the standard Euclidean scalar product on $\mathbb{R}^{n\times n}$. The identity tensor on $\mathbb{R}^{n\times n}$ will be denoted by $\id$, so that $\tr{(X)}=\langle {X},{\id}\rangle$.%
}

\section{\boldmath Rank-one convexity and polyconvexity on $\SL(2)$}
We consider the concepts of \emph{rank-one convexity} and \emph{polyconvexity} of real-valued objective, isotropic functions $W$ on the group $\GLp(2)=\{X\in\mathbb{R}^{2\times 2} \setvert \det X > 0\}$ and on its subgroup $\SL(2)=\{X\in\mathbb{R}^{2\times 2} \,\setvert\, \det X =1\}$. 
We denote by $\lambda_1, \lambda_2$ the singular values of $F$ (i.e.\ the eigenvalues of $U=\sqrt{F^T\,F}$), and $\lambdamax\colonequals\max\{\lambda_1,\lambda_2\}$ denotes the largest singular value of $F$ (also called the \emph{spectral norm} of $F$). The elastic energy $W$ is assumed to be \emph{objective} as well as \emph{isotropic}, i.e.\ to satisfy the equality
\[
	W(Q_1\,F\,Q_2) = W(F) \quad\text{ for all }\; F\in\GLpz \;\text{ and all }\; Q_1,Q_2\in\OO(2)\,,
\]
where $\OO(2)=\{X\in \mathbb{R}^{2\times 2} \setvert X^T X=\id\}$ denotes the orthogonal group.

\subsection{Basic definitions}
In order to discuss the different convexity conditions, we first need to define rank-one convexity as well as polyconvexity in the incompressible (planar) case, i.e.\ for functions on $\SL(2)$.
\subsubsection{Rank-one convexity}
Following a definition by Ball \cite[Definition 3.2]{Ball77}, we say that $W$ is \emph{rank-one convex} on $\GLp(n)$ if it is convex on all closed line segments in $\GLp(n)$ with end points differing by a matrix of rank one, i.e.
\begin{align*}
	W( F+(1-\theta)\xi\otimes \eta)=W( \theta \,F+(1-\theta)\, (F+\xi\otimes \eta))\, \leq \theta \,W( F)+(1-\theta) W(F+\xi\otimes \eta)
\end{align*}
for all $F\in \GLp(n)$, $\theta\in[0,1]$ and all $\,\, \xi,\, \eta\in\mathbb{R}^2$ with $F+t\cdot \xi\otimes \eta\in \GLp(n)$ for all $t\in[0,1]$, where $\xi\otimes\eta$ denotes the dyadic product.

Since, in the following, we will consider the case of energy functions which are defined on only on the special linear group $\SL(2)$, we need to define rank-one convexity for functions $W\colon\SL(2)\rightarrow\mathbb{R}$. The restrictions imposed by rank-one convexity are less strict in this case: the functions needs to be convex only along line segments in rank-one direction which are contained in the set $\SL(2)$, i.e.\ satisfy the additional condition
\begin{align}\label{constrain}
	\det(F+t\cdot\xi\otimes\eta)=1 \qquad\text{for all }\;t\in[0,1]\,.
\end{align}
The following lemma can be used to simplify condition \eqref{constrain} and thus allows us to give a simpler definition of rank-one convexity in the incompressible case.
\begin{lemma}
	Let $F,H\in \mathbb{R}^{2\times 2}$. Then
	\begin{align}
		\det(F+H)=\det (F) + \det (F)\,\langle F^{-T}, H\rangle +\det H\,.
	\end{align}
\end{lemma}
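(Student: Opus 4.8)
The plan is to verify this identity by exploiting the fact that in spatial dimension two the determinant is a \emph{quadratic} form in the matrix entries. Consequently, for fixed $F,H\in\mathbb{R}^{2\times2}$ the map $t\mapsto\det(F+t\,H)$ is an exact quadratic polynomial, and the claimed formula is precisely its value at $t=1$, with the three summands being the homogeneous contributions of degree $0$, $1$ and $2$ in $H$. The most elementary way to make this precise, and the route I would take, is to use the multilinearity of the determinant in the columns of its argument, which avoids any appeal to differentiation.

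Concretely, writing $F_{\bullet i}$ and $H_{\bullet i}$ for the $i$-th columns of $F$ and $H$, multilinearity of $\det$ yields the four-term expansion
\[
	\det(F+H)=\det F+\det[F_{\bullet 1}\,|\,H_{\bullet 2}]+\det[H_{\bullet 1}\,|\,F_{\bullet 2}]+\det H\,,
\]
in which the first and last terms are already the desired $\det F$ and $\det H$. It then remains to identify the two mixed ``cross'' terms with $\det(F)\,\langle F^{-T},H\rangle$. For this I would introduce the cofactor matrix $\mathrm{Cof}(F)=\det(F)\,F^{-T}$ and recall $\langle A,B\rangle=\tr(A^TB)$, so that $\det(F)\,\langle F^{-T},H\rangle=\langle\mathrm{Cof}(F),H\rangle$; a direct comparison of the two $2\times2$ determinants $\det[F_{\bullet 1}\,|\,H_{\bullet 2}]$ and $\det[H_{\bullet 1}\,|\,F_{\bullet 2}]$ against the entries of $\mathrm{Cof}(F)$ then shows that their sum is exactly $\langle\mathrm{Cof}(F),H\rangle$, completing the proof.

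There is essentially no hard step here; the only point that requires a word of care is that the identity is asserted for \emph{all} $F\in\mathbb{R}^{2\times2}$, including singular $F$, where $F^{-T}$ is undefined. The resolution is that the product $\det(F)\,F^{-T}$ must be read as the cofactor matrix $\mathrm{Cof}(F)$, which remains perfectly well defined even when $\det F=0$; the column expansion above produces this quantity directly and so requires no case distinction. An equally short alternative would be to invoke Jacobi's formula $\frac{d}{dt}\det(F+t\,H)\big|_{t=0}=\tr(\mathrm{adj}(F)\,H)=\langle\mathrm{Cof}(F),H\rangle$ for the linear coefficient together with the homogeneity $\det(t\,H)=t^2\det H$ for the quadratic one, extending to singular $F$ by continuity; I would, however, favour the elementary multilinear expansion as it is self-contained and manifestly valid on all of $\mathbb{R}^{2\times2}$.
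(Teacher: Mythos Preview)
Your argument is correct. The paper proceeds differently: it first restricts to invertible $F$ (extending to singular $F$ by continuity at the end), factors $F+H=(\id+H F^{-1})\,F$, and then applies the two-dimensional identity $\det(\id+X)=1+\tr X+\det X$ with $X=H F^{-1}$, so that multiplicativity of the determinant yields the three terms directly. Your route via column multilinearity and the cofactor matrix is more elementary in the sense that it never leaves the polynomial world: you never need $F^{-1}$ to exist, and the identification $\det(F)\,F^{-T}=\mathrm{Cof}(F)$ makes the formula manifestly valid for all $F\in\mathbb{R}^{2\times2}$ without any limiting argument. The paper's approach, on the other hand, is a one-line consequence of a well-known scalar identity and generalises cleanly (the analogue $\det(\id+X)=1+\tr X+\tr\mathrm{Cof}(X)+\det X$ in dimension three, etc.), whereas your column expansion would produce $2^n$ terms in dimension $n$. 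Both are perfectly adequate here; your remark that the quantity $\det(F)\,\langle F^{-T},H\rangle$ should be read as $\langle\mathrm{Cof}(F),H\rangle$ for singular $F$ is exactly the point the paper's continuity argument is implicitly making.
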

\begin{proof}
	For continuity reasons, it suffices to consider the case $\det F\neq 0$. Since, for $H\in\R^{2\times2}$,
	\begin{align}
		\det(\id+H)=1+\tr H+\det H\,,
	\end{align}
	we find
	\begin{align}
		\det(F+H)&=\det((\id+H\, F^{-1})\, F)=\det(\id+H\, F^{-1})\, \det(F)\nonumber\\
		&=\det(F)(1 + \tr(H\, F^{-1}) + \det(H\, F^{-1}))\nonumber\\
		&=\det(F)+\det(F)\,\langle H, F^{-T}\rangle +\det(H)\,.\nonumber\qedhere
	\end{align}
\end{proof}
\noindent Since $\rank(\xi\otimes\eta)=1$ implies $\det(\xi\otimes\eta)=0$, we thus find
\[
	\det(F+t\cdot\xi\otimes\eta) = \det F\,[1+t\,\langle F^{-T},\, \xi\otimes\eta\rangle]
\]
for $F\in \mathbb{R}^{2\times 2}$. In particular, condition \eqref{constrain} is satisfied if and only if
\begin{equation}
\label{eq:constrainDirectCondition}
	\langle F^{-T},\, \xi\otimes\eta\rangle=0\,.
\end{equation}

Condition \eqref{eq:constrainDirectCondition} can also be interpreted geometrically \cite{DunnFosdickZhang}:
if the set $\SL(2)$ is regarded as a three-dimensional surface embedded in the 4-dimensional linear space $\mathbb{R}^{2\times2}$ of all second order tensors, then the relation $D_F(\det \, F). H=(\det F)\,\langle F^{-T}, H\rangle$ implies that the tangent space $T_{\SL(2)}$ to $\SL(2)$ at $F\in \SL(2)$ is given by
\begin{align}\label{tangentspace}
	T_{\SL(2)}(F)\colonequals\{H\in \mathbb{R}^{2\times 2} \setvert \langle H, F^{-T}\rangle=0\}\,.
\end{align}
It follows that for $F\in \SL(2)$,
\begin{align}
	\det(F+t\cdot\xi\otimes\eta)=1\;\;\forall\,t\in[0,1] \quad&\iff\quad \langle \xi, F^{-T}\eta\rangle=0 \quad\iff\quad \xi\otimes \eta\in T_{\SL(2)}(F)\,.%
\end{align}
We also note that, due to the above, \eqref{constrain} already implies that the equality $\det(F+t\cdot\xi\otimes\eta)=1$ holds for all $t\in\R$ as well. These well-known (see e.g.\ \cite{DunnFosdickZhang}) equivalences allow for the following definition of rank-one convexity.
\begin{definition}
	\label{definition:rankOneConvexitySL}
	A function $W\colon\SL(2)\to\mathbb{R}$ is called \emph{rank-one convex} if the mapping
	\[
		t\mapsto W(F+t\cdot \xi\otimes \eta)%
	\]
	is convex on $\R$ for all $F\in\SL(2)$ and all $\xi,\eta\in\mathbb{R}^{2}$ such that $\xi\otimes \eta\in T_{\SL(2)}(F)$.
\end{definition}

\subsubsection{Polyconvexity}
Throughout this article, we will use the following definitions of polyconvexity for energy functions defined on the sets on $\mathbb{R}^{n\times n}$, $\GLp(n)= \{X\in\Rnn \,\setvert\, \det X >0 \}$ and $\SL(n)= \{X\in\Rnn \,\setvert\, \det X =1 \}$, respectively.
\begin{definition}\label{definition:polyconvexityGLSL}
	~
	\begin{itemize}
		\item[i)] (Ball \cite{Ball77})
			A function $W\colon\mathbb{R}^{n\times n}\to\mathbb{R}\cup\{\infty\}$ is called polyconvex if there exists a convex function $P\colon\mathbb{R}^m\to\mathbb{R}\cup\{\infty\}$ such that
			\begin{equation}
				W(F) = P(\mathbb{M}(F)) \qquad\text{for all }\;F\in\mathbb{R}^{n\times n}\,,
			\end{equation}
			where $\mathbb{M}(F)\in\mathbb{R}^m$ denotes the vector of all minors of $F$.%
		\item[ii)] (Mielke \cite{Mielke05JC})
			A function $\Winc\colon\GLp(n)\to\mathbb{R}$ is called polyconvex if the function
			\begin{align}
				\widetilde{W}\colon\mathbb{R}^{n\times n}\to\mathbb{R}\cup\{\infty\}\,,\quad \widetilde{W}(F)=
				\begin{cases}
				\Winc(F) &:\; F\in\GLp\\
				\infty &:\; F\notin\GLp
				\end{cases}
			\end{align}
			is polyconvex according to i).
		\item[iii)] (Mielke \cite{Mielke05JC})
			A function $\Winc\colon\SL(n)\to\mathbb{R}$ is called polyconvex if the function
			\begin{align}
			\widetilde{W}\colon\mathbb{R}^{n\times n}\to\mathbb{R}\cup\{\infty\}\,,\quad \widetilde{W}(F)=
			\begin{cases}
			\Winc(F) &:\; F\in\SL(n)\\
			\infty &:\; F\notin\SL(n)
			\end{cases}
			\end{align}
			is polyconvex according to i).
	\end{itemize}
\end{definition}

\subsection{Criteria for rank-one convexity and polyconvexity in the incompressible planar case}
\label{section:criteria}

For twice differentiable energies on $\SL(3)$, necessary and sufficient conditions for rank-one convexity were established by Zubov and Rudev
\cite{ZubovRudev,zubov1995necessary} as well as by Zee and Sternberg \cite{Zee83}.
An easily applicable criterion for rank-one convexity is available for the special case of differentiable functions on $\SL(3)$ of the form $F\mapsto W(F)=\phi(\gamma)$, where $\gamma=\sqrt{\norm{F}^2-3}$ represents the \emph{amount of shear} (cf.\ Theorem \ref{mainresultSL2}):%
Dunn, Fosdick and Zhang \cite{DunnFosdickZhang} have shown that the energy $W$ is rank-one convex on $\SL(3)$ if and only if $\phi$ is nondecreasing and convex. This criterion is related, with appropriate modifications, to those obtained by Zee and Sternberg \cite[p.\ 83]{Zee83}, but it only requires the energy to be once differentiable. Note that not every function on $\SL(3)$ can be written in the form $W(F)=\phi(\sqrt{\norm{F}^2-3})$, so this criterion cannot be applied in the general case of incompressible energies (as was already noted in \cite{DunnFosdickZhang}). In contrast to the three-dimensional case, every energy defined on $\SL(2)$ admits a unique representation in terms of the amount of shear $\sqrt{\norm{F}^2-2}$. This representation was also used by Mielke \cite{Mielke05JC} in order to establish necessary and sufficient conditions for polyconvexity on $\SL(2)$, cf.\ Proposition \ref{prop:mielkePolyCriterion}.

The following necessary and sufficient conditions for rank-one convexity on $\SL(2)$ are adapted from a similar criterion for the ellipticity%
\footnote{%
	Abeyaratne \cite{abeyaratne1980discontinuous} considers the \emph{ordinary ellipticity} of twice-differentiable energies, which are defined as follows:
	\vspace*{.35em}
	\begin{alignat*}{3}
		&\text{for compressible materials:} &&\det\, Q\neq 0 &&(\text{or }\; \langle Q\,\xi,\, \xi\rangle \neq 0 \quad\text{for all }\; \xi\in\mathbb{R}^2\setminus\{0\})\,,\\
		&\text{for incompressible materials:}\quad &&\det
		\matrs{Q_{11}&Q_{12}&-m_1\\ Q_{21}&Q_{22}&-m_2\\ m_1&m_2&0}
		\neq 0 \quad &&(\text{or }\; \langle Q\,\xi,\, \xi\rangle \neq 0 \quad\text{for all }\; \xi\in\mathbb{R}^2\setminus\{0\} \;\text{ with }\; \langle \xi,\, F^{-T}\eta\rangle = 0)\,,
	\end{alignat*}
	where $Q_{\alpha\gamma} \!=\! \sum_{\beta,\delta=1,2}\,\frac{\partial^2 W}{\partial F_{\alpha\beta}\,\partial F_{\gamma\delta}}\, \eta_\beta\eta_\delta$ with $\alpha,\gamma\in\{1,2\}$ is the \emph{acoustic tensor}, $m=F^{-T}\eta$ and $\eta\in\mathbb{R}^2\setminus\{0\}$. Abeyaratne's mechanical motivation is the requirement that the system of the jump equations of equilibrium should be satisfied only by the trivial solution. In the three-dimensional case, this concept was also considered by Zee and Sternberg \cite{Zee83}. We recall that for compressible materials, the \emph{strong ellipticity} (or \emph{strict Legendre-Hadamard ellipticity}) is equivalent to the positive definiteness of the acoustic tensor $Q$, while rank-one convexity is equivalent to the positive semidefiniteness of $Q$.
}
of incompressible, isotropic hyperelastic solids by Abeyaratne \cite{abeyaratne1980discontinuous}. A proof of the Proposition is given in Appendix \ref{Appendix1}.

\begin{proposition}\label{Abeyaratne}
	Let $W\colon\SL(2)\to\mathbb{R}$ be a twice-differentiable objective and isotropic function. Then there exists a unique function $\psi\colon[0,\infty)\to\mathbb{R}$ such that
	\begin{equation}
	\label{eq:representationFormulaHat}
		W(F) = \psi(I),\qquad I=\norm{F}^2=\lambdamax^2(F)+\frac{1}{\lambdamax^2(F)}
	\end{equation}
	for all $F\in\SL(2)$, where $\lambdamax(F)$ is the largest singular value of $F$. Furthermore, the following are equivalent:
	\begin{itemize}
		\item[i)] $W$ is rank-one convex,
		\item[ii)] $\psi$ satisfies the inequalities 
			\begin{align}
			\frac{d \psi}{d I}(I)\geq 0\,, \quad 2\, (I-2)\,\frac{d^2 \psi}{d I^2}(I)+\frac{d \psi}{d I}(I)\geq 0 \quad\text{for all }\; I\in [2,\infty)\,.
			\end{align}
	\end{itemize}
\end{proposition}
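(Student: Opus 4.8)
The plan is to treat the representation formula and the convexity characterization separately. For \eqref{eq:representationFormulaHat}, I would first invoke bi-$\OO(2)$-invariance to conclude that $W$ depends on $F$ only through its singular values $\lambda_1,\lambda_2$; on $\SL(2)$ the constraint $\det F=1$ forces $\lambda_1\lambda_2=1$, so that $W$ is in fact a function of the single variable $\lambdamax\in[1,\infty)$. Since $\norm{F}^2=\lambda_1^2+\lambda_2^2=\lambdamax^2+\lambdamax^{-2}=I$ and the map $\lambdamax\mapsto I$ is a strictly increasing bijection from $[1,\infty)$ onto $[2,\infty)$, this yields a uniquely determined $\psi$ on $[2,\infty)$ with $W(F)=\psi(I)$.

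For the equivalence, note that because $W$ is twice differentiable, Definition \ref{definition:rankOneConvexitySL} amounts to the requirement that $\frac{d^2}{dt^2}W(F+t\cdot\xi\otimes\eta)\geq0$ for all $t$, all $F\in\SL(2)$ and all $\xi,\eta\in\mathbb{R}^2$ satisfying the tangency constraint $\langle\xi,F^{-T}\eta\rangle=0$ of \eqref{eq:constrainDirectCondition}; since each point of such a line segment again lies in $\SL(2)$ with the same rank-one direction remaining tangent, it suffices to check the inequality at $t=0$ as $F$ ranges over all of $\SL(2)$. Using bi-$\OO(2)$-invariance once more, I would reduce to diagonal $F=D=\operatorname{diag}(\lambda,1/\lambda)$ with $\lambda=\lambdamax\geq1$: writing $F=Q_1DQ_2$ with $Q_1,Q_2\in\mathrm{SO}(2)$ and absorbing the rotations into $\tilde\xi=Q_1^T\xi$, $\tilde\eta=Q_2\eta$, one verifies that $W(F+t\cdot\xi\otimes\eta)=W(D+t\cdot\tilde\xi\otimes\tilde\eta)$ and that the constraint becomes the identical tangency condition $\langle\tilde\xi,D^{-1}\tilde\eta\rangle=0$ for $D$. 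With $\norm{F+t\cdot\xi\otimes\eta}^2=I+2t\,\langle\xi,F\eta\rangle+t^2\,|\xi|^2|\eta|^2$, differentiating $t\mapsto\psi(\norm{F+t\cdot\xi\otimes\eta}^2)$ twice at $t=0$ then turns rank-one convexity into the pointwise inequality
\begin{equation*}
	2\,\frac{d^2\psi}{dI^2}(I)\,\langle\xi,F\eta\rangle^2+\frac{d\psi}{dI}(I)\,|\xi|^2|\eta|^2\geq0\,,
\end{equation*}
required to hold for all admissible $\xi,\eta$ at every $F=D$.

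It then remains to determine the range of $\langle\xi,D\eta\rangle^2/(|\xi|^2|\eta|^2)$ over the constraint set $\langle\xi,D^{-1}\eta\rangle=0$. Normalizing $|\xi|=|\eta|=1$, the tangency condition pins $\xi$ down (in the plane) as a unit normal to $D^{-1}\eta$, and a short optimization---by Lagrange multipliers, or explicitly with $\eta=(\cos\vartheta,\sin\vartheta)$---shows that $\langle\xi,D\eta\rangle^2$ sweeps out the entire interval $[0,\,I-2]$, with maximal value $(\lambda-\tfrac1\lambda)^2=\lambda^2-2+\lambda^{-2}=I-2$. Writing $s=\langle\xi,D\eta\rangle^2\in[0,I-2]$, the inequality above reads $2\,\frac{d^2\psi}{dI^2}(I)\,s+\frac{d\psi}{dI}(I)\geq0$ for all such $s$; being affine in $s$, it holds on the whole interval if and only if it holds at the endpoints $s=0$ and $s=I-2$, which are precisely the two inequalities of ii). I expect the extremal computation producing the value $I-2$ to be the main obstacle, since this is where the geometry of the incompressibility constraint is genuinely used.
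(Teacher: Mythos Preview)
Your argument is correct and genuinely different from the paper's. The paper works through the acoustic tensor: it parametrizes the admissible $\xi$ explicitly as $\xi=a\,\epsilon\,F^{-T}\eta$ (using that in two dimensions the orthogonal complement of $F^{-T}\eta$ is one-dimensional), computes the fourth-order elasticity tensor $\mathbb{C}_{\alpha\beta\gamma\delta}=4F_{\alpha\beta}F_{\gamma\delta}\,\psi''(I)+2\,\delta_{\alpha\gamma}\delta_{\beta\delta}\,\psi'(I)$, and reduces the Legendre--Hadamard condition to a quartic inequality in the components of $\eta$, namely $(\lambda_1^2\eta_2^2+\lambda_2^2\eta_1^2)^2\,\psi'(I)+2(\lambda_1^2-\lambda_2^2)^2\eta_1^2\eta_2^2\,\psi''(I)\geq0$. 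This is then recast as $\langle E\zeta,\zeta\rangle\geq0$ for $\zeta\in\mathbb{R}^2_+$ and analyzed via the sign of $E_{12}$ and $\det E$, with a further simplification needed to pass from the raw conditions to the clean pair in ii).

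Your route bypasses all of the tensor machinery: you differentiate $t\mapsto\psi(\norm{F+t\,\xi\otimes\eta}^2)$ directly and isolate the single scalar parameter $s=\langle\xi,D\eta\rangle^2$, reducing everything to the range of $s$ under the tangency constraint. The extremal computation giving $\max s=(\lambda-\tfrac{1}{\lambda})^2=I-2$ is the only nontrivial step, and it falls out cleanly (the maximizing $\eta$ has $\cos^2\vartheta=\lambda^2/(\lambda^2+1)$). The affine-in-$s$ observation then immediately yields the two endpoint inequalities, with no case analysis. Your approach is more elementary and arguably more transparent about where the number $I-2$ comes from; the paper's approach stays closer to Abeyaratne's original ellipticity framework and makes the connection to the classical acoustic-tensor literature explicit.
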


In the two-dimensional incompressible case, i.e.\ for an objective and isotropic energy $W$ on $\SL(2)$, another representation of the energy can be obtained from formula \eqref{eq:representationFormulaHat}: since, for $F\in\SL(2)$,
\[
	\gamma\colonequals\sqrt{\norm{F}^2-2} = \sqrt{\lambda_1^2+\lambda_2^2-2} = \sqrt{(\lambda_1-\lambda_2)^2} = \abs{\lambda_1-\lambda_2} = \lambdamax-\lambdamin = \lambdamax-\frac{1}{\lambdamax}
\]
and
\[
	I=2+\left(\lambdamax-\frac{1}{\lambdamax}\right)^2=2+\gamma^2\,,
\]
there exists a unique function $\phi\colon[0,\infty)\to \mathbb{R}$ such that
\begin{equation}
	W(F)= \psi(I) = \phi(\sqrt{\norm{F}^2-2}) = \phi\Big(\lambdamax-\frac{1}{\lambdamax}\Big) \quad\text{ for all }\; F\in \SL(2)\,.
\end{equation}
The next criterion for rank-one convexity in terms of this representation can be obtained by a direct adaptation of the proof of the aforementioned three-dimensional result by Dunn, Fosdick and Zhang \cite{DunnFosdickZhang} to the two-dimensional case.
\begin{proposition}
	\label{propFosdick}
	Let $W\colon\SL(2)\to\mathbb{R}$ be an objective and isotropic differentiable function. Then there exists a unique function $\phi\colon[0,\infty)\to\mathbb{R}$ such that
	\[
		W(F) =\phi\Big(\lambdamax(F)-\frac{1}{\lambdamax(F)}\Big)
	\]
	for all $F\in\SL(2)$, where $\lambdamax(F)$ is the largest singular value of $F$. Furthermore, the following are equivalent:
	\begin{itemize}
		\item[i)] $W$ is rank-one convex,
		\item[ii)] $\phi$ is nondecreasing and convex on $[0,\infty)$.
	\end{itemize}
\end{proposition}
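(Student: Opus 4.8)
The plan is to reduce the claim to a one-dimensional statement about the amount-of-shear representation. First I would record the representation: objectivity and isotropy force $W$ to depend on $F\in\SL(2)$ only through its unordered pair of singular values $\{\lambda_1,\lambda_2\}$, and since $\lambda_1\lambda_2=\det F=1$ this pair equals $\{\lambdamax,1/\lambdamax\}$ with $\lambdamax\geq1$, which is in bijection with $\gamma\colonequals\lambdamax-\frac{1}{\lambdamax}\in[0,\infty)$. Hence there is a unique $\phi\colon[0,\infty)\to\R$ with $W(F)=\phi(\gamma)$; for $\gamma>0$ the singular values are distinct and $\gamma$ depends smoothly on $F$, so $\phi$ inherits differentiability on $(0,\infty)$ (and is continuous up to $0$). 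This is the representation already recorded between Propositions \ref{Abeyaratne} and \ref{propFosdick}, where $\phi(\gamma)=\psi(2+\gamma^2)$.

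Next I would adapt the argument of Dunn, Fosdick and Zhang by analyzing rank-one convexity directly along admissible lines. Fix $F\in\SL(2)$ and $\xi\otimes\eta\in T_{\SL(2)}(F)$. Since the tangency condition keeps the whole line inside $\SL(2)$, I may write $g(t)\colonequals W(F+t\,\xi\otimes\eta)=\phi\bigl(\sqrt{N(t)-2}\bigr)$ with $N(t)=\norm{F+t\,\xi\otimes\eta}^2$, an upward parabola satisfying $N''=2\,\abs{\xi}^2\abs{\eta}^2>0$. Crucially, $N(t)\geq2$ for all $t$, because every $G\in\SL(2)$ obeys $\norm{G}^2=\lambda_1^2+\lambda_2^2\geq2\lambda_1\lambda_2=2$; thus the minimum value $N_{\min}$ of $N$ satisfies $N_{\min}\geq2$. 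A short computation then shows that $h(t)\colonequals\sqrt{N(t)-2}$ is convex in $t$: wherever $N>2$ one finds $h''=\frac{(N_{\min}-2)\,\abs{\xi}^2\abs{\eta}^2}{(N-2)^{3/2}}\geq0$, while at an isolated zero of $N-2$ the graph of $h$ has an upward corner and remains convex. Granting this, sufficiency is immediate: if $\phi$ is nondecreasing and convex, then $g=\phi\circ h$ is the composition of a convex function with a nondecreasing convex function, hence convex, so $W$ is rank-one convex.

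For necessity I would test extremal directions. Choosing the direction so that $N_{\min}=2+\gamma_*^2>2$ produces an interior minimum of $h$ at height $\gamma_*$ where $h'=0$ and $h''>0$, so that $g''=\phi'(\gamma_*)\,h''$ there; convexity of $g$ forces $\phi'(\gamma_*)\geq0$, and ranging over $F$ and admissible directions realizes every $\gamma_*>0$. Choosing instead a direction with $N_{\min}=2$ makes $h$ affine on each side of its corner and lets $\gamma=h(t)$ sweep all of $(0,\infty)$, so that $g''=\abs{\xi}^2\abs{\eta}^2\,\phi''$ forces $\phi''\geq0$, while the upward corner forces the one-sided derivative $\phi'(0^+)\geq0$. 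Reducing $F$ to $\mathrm{diag}(\lambdamax,1/\lambdamax)$ by bi-$\OO(2)$-invariance makes the explicit construction of these directions in $T_{\SL(2)}(F)$ routine.

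The main obstacle will be the necessity argument at the boundary $\gamma=0$, i.e.\ at $F\in\OO(2)\cap\SL(2)$, where $N$ attains its minimum value $2$, the map $t\mapsto\sqrt{N(t)-2}$ is only Lipschitz, and one must deduce convexity of $g$ across this non-smooth point using only the $C^1$-regularity of $\phi$; this is precisely where the nondecreasing condition is extracted and where the direct DFZ-type argument is needed. I would cross-check the whole equivalence against the cleaner (but $C^2$-requiring) route through Proposition \ref{Abeyaratne}: differentiating $\phi(\gamma)=\psi(2+\gamma^2)$ gives $\phi'(\gamma)=2\gamma\,\psi'(I)$ and $\phi''(\gamma)=2\bigl[\psi'(I)+2(I-2)\psi''(I)\bigr]$, so the two inequalities of Proposition \ref{Abeyaratne} translate exactly into ``$\phi$ nondecreasing'' and ``$\phi$ convex'', confirming the statement in the twice-differentiable case.
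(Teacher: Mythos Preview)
Your proposal is correct and follows exactly the route the paper indicates: it carries out the direct adaptation of the Dunn--Fosdick--Zhang argument (which the paper cites without writing out) and also records the $C^2$ cross-check via Proposition~\ref{Abeyaratne} and the chain rule, just as the paper does. One small point of presentation: since only $C^1$-regularity is assumed, your expressions $g''=\phi'(\gamma_*)\,h''$ and $g''=\abs{\xi}^2\abs{\eta}^2\,\phi''$ should be rephrased via monotone difference quotients or affine reparametrization --- in fact your shear line with $N_{\min}=2$ already yields both convexity and monotonicity of $\phi$ from the convexity of the even extension $\widetilde\phi(t)=\phi(\abs{t})$, so the first test direction is not needed.
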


It is easy to see that if an energy (and thus $\psi$) is twice differentiable, then Theorem \ref{propFosdick} and Theorem \ref{Abeyaratne} are equivalent: for $\gamma=\sqrt{I-2}$ we find
\begin{align}
	\frac{d \psi}{d I}=\frac{d \phi}{d \gamma}\,\frac{1}{2\,\sqrt{I-2}} = \frac{1}{\,\gamma}\,\frac{d \phi}{d \gamma}
	\qquad\text{and}\qquad
	\frac{d^2 \psi}{d I^2} = \frac{1}{4\,\gamma^2}\,\frac{d^2 \phi}{d \gamma^2} - \frac{1}{4\,\gamma^3}\,\frac{d \phi}{d \gamma}\,,
\end{align}
thus the monotonicity of $\phi$ is equivalent to $\frac{d \psi}{d I}(I)\geq 0$ for all $I\in [2,\infty)$, while the convexity of $\phi$ is equivalent to $2\,(I-2)\,\frac{d^2 \psi}{d I^2}(I)+\frac{d \psi}{d I}(I)\geq 0$ for all $I\in [2,\infty)$.

\vspace*{.7em}

In addition to these criteria for rank-one convexity, we will use the following polyconvexity criterion, which is due to Mielke \cite[Theorem 5.1]{Mielke05JC}.
\begin{proposition}
	\label{prop:mielkePolyCriterion}
	Let $W\colon\SL(2)\to\mathbb{R}$ be an objective and isotropic function, and $\phi\colon[0,\infty)\to\mathbb{R}$ the unique function with
	\[
	W(F) = \phi\Big(\lambdamax(F)-\frac{1}{\lambdamax(F)}\Big)
	\]
	for all $F\in\SL(2)$, where $\lambdamax(F)$ is the largest singular value of $F$. The following are equivalent:
	\begin{itemize}
		\item[i)] $\phi$ is nondecreasing and convex on $[0,\infty)$,
		\item[ii)] $W$ is polyconvex (in the sense of Definition \ref{definition:polyconvexityGLSL} iii)).
	\end{itemize}
\end{proposition}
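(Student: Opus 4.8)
The plan is to prove both implications by direct construction, based on the elementary identity
\[
\norm{F}^2-2\det F=(F_{11}-F_{22})^2+(F_{12}+F_{21})^2\qquad\text{for all }\;F\in\mathbb{R}^{2\times2}\,,
\]
which exhibits the amount of shear $\gamma(F)\colonequals\sqrt{\norm{F}^2-2\det F}$ as the Euclidean norm of the linear image $F\mapsto(F_{11}-F_{22},\,F_{12}+F_{21})$. Hence $\gamma$ is a nonnegative convex function (a seminorm in the entries of $F$) on all of $\mathbb{R}^{2\times2}$. On $\SL(2)$ one has $\det F=1$ and therefore $\gamma(F)=\sqrt{\norm{F}^2-2}=\lambdamax(F)-\frac{1}{\lambdamax(F)}$, so that $W(F)=\phi(\gamma(F))$ with the function $\phi$ provided by the statement.

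For the implication i)$\,\Rightarrow\,$ii) I would write the minors of $F$ as $\mathbb{M}(F)=(F,\det F)\in\mathbb{R}^{2\times2}\times\mathbb{R}$ and exhibit the explicit function
\[
P(A,\delta)\colonequals\phi(\gamma(A))+
\begin{cases}
0&:\;\delta=1\\
+\infty&:\;\delta\neq1
\end{cases}
\]
on the space of minors. Since $\gamma$ is convex and nonnegative and $\phi$ is convex and nondecreasing, the composition $A\mapsto\phi(\gamma(A))$ is convex on $\mathbb{R}^{2\times2}$; adding the convex indicator of the hyperplane $\{\delta=1\}$ preserves convexity, so $P$ is convex. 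Evaluating $P$ on the minors reproduces the extension $\widetilde W$ from Definition \ref{definition:polyconvexityGLSL} iii) exactly: for $F\in\SL(2)$ we obtain $P(F,1)=\phi(\gamma(F))=W(F)$, whereas for $F\notin\SL(2)$ the inequality $\det F\neq1$ forces $P(\mathbb{M}(F))=+\infty$. Thus $\widetilde W=P\circ\mathbb{M}$ is polyconvex. Observe that it is precisely the monotonicity of $\phi$ that makes $\phi\circ\gamma$ convex.

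For the converse ii)$\,\Rightarrow\,$i) I would first note that polyconvexity forces rank-one convexity on $\SL(2)$ directly: along any admissible path $F(t)=F+t\,\xi\otimes\eta$ with $\xi\otimes\eta\in T_{\SL(2)}(F)$ one has $\det F(t)\equiv1$, so $\mathbb{M}(F(t))=(F(t),1)$ is affine in $t$ and $t\mapsto W(F(t))=P(\mathbb{M}(F(t)))$ is convex. It then suffices to test this along one well-chosen tangent direction. Taking $F_0=\id$ and $\xi=(1,1)$, $\eta=(-1,1)$, so that $\langle\xi,F_0^{-T}\eta\rangle=\langle\xi,\eta\rangle=0$, the path
\[
F(t)=\id+t\,\xi\otimes\eta=\begin{pmatrix}1-t&t\\-t&1+t\end{pmatrix}
\]
lies in $\SL(2)$ and satisfies $\gamma(F(t))=2\abs{t}$. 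Rank-one convexity then means that $t\mapsto\phi(2\abs{t})$ is convex on $\mathbb{R}$. Restricting to $t\geq0$ yields the convexity of $\phi$ on $[0,\infty)$, while the evenness of $t\mapsto\phi(2\abs{t})$ forces its minimum at $t=0$ and hence the monotonicity of $\phi$; no differentiability of $W$ is required.

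The decisive step, and the one I would treat most carefully, is the quadratic identity that realises $\gamma$ as a seminorm, since this single fact drives both directions: it produces the explicit convex $P$ in the sufficiency argument and, read in reverse, shows that along suitable rank-one segments in $\SL(2)$ the amount of shear varies affinely, which is exactly what lets rank-one convexity transfer to the convexity and monotonicity of $\phi$. The remaining care is bookkeeping: verifying that the indicator term correctly encodes the $+\infty$ extension off $\SL(2)$ demanded by Mielke's definition, and that the tangency and determinant conditions are met by the chosen rank-one direction so that the test path genuinely remains in $\SL(2)$.
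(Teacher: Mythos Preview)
The paper does not prove this proposition; it attributes the result to Mielke \cite[Theorem~5.1]{Mielke05JC} and uses it as a black box in the proof of Theorem~\ref{mainresultSL2}. Your argument is correct and supplies a self-contained proof.

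Your key observation---that the quadratic identity $\norm{F}^2-2\det F=(F_{11}-F_{22})^2+(F_{12}+F_{21})^2$ makes $\gamma$ a seminorm on $\mathbb{R}^{2\times2}$---is exactly what is needed, and it is not made explicit anywhere in the paper. For i)$\Rightarrow$ii) this yields an explicit convex representative $P$ on the space of minors, which the paper never writes down. For ii)$\Rightarrow$i) your argument runs parallel to the paper's proof of i)$\Rightarrow$iii)$\Rightarrow$iv) in Theorem~\ref{mainresultSL2}: the paper tests rank-one convexity along the simple shear $\gamma\mapsto\bigl(\begin{smallmatrix}1&\gamma\\0&1\end{smallmatrix}\bigr)$ and computes singular values to identify $\widetilde\phi$ with $\phi$, whereas your choice of direction gives $\gamma(F(t))=2\lvert t\rvert$ immediately from the seminorm formula, bypassing the singular-value computation. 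Both routes finish with the same ``even and convex on $\mathbb{R}$ implies nondecreasing on $[0,\infty)$'' step. The trade-off is that the paper's shear path is the canonical one from the mechanics literature, while yours is tailored to make the seminorm linear along the segment; your version is arguably cleaner as a pure convexity argument.
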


\section{\boldmath Equivalence of rank-one convexity and polyconvexity on $\SL(2)$}
\label{section:mainResult}

We now want to show that for objective and isotropic energy functions on $\SL(2)$, rank-one convexity and polyconvexity are equivalent.
\subsection{Differentiable functions}
For \emph{differentiable} functions, this result can be obtained directly by comparing Proposition \ref{propFosdick} and Proposition \ref{prop:mielkePolyCriterion}.
\begin{proposition}
	\label{equivalenceDiffable}
	Let $W\colon\SL(2)\to\mathbb{R}$ be objective and isotropic as well as \emph{differentiable}. Then the following are equivalent:
	\begin{itemize}
		\item[i)] $W$ is rank-one convex,
		\item[ii)] the function $\phi\colon[0,\infty)\to\mathbb{R}$ with $W(F) =\phi\Big(\lambdamax(F)-\displaystyle\frac{1}{\lambdamax(F)}\Big)$ is nondecreasing and convex,
		\item[iii)] $W$ is polyconvex.
	\end{itemize}
\end{proposition}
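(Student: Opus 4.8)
The plan is to prove the equivalence of the three conditions by exploiting the criteria already assembled in the previous section, rather than by working directly with the definitions of rank-one convexity and polyconvexity. The key observation is that both Proposition~\ref{propFosdick} and Proposition~\ref{prop:mielkePolyCriterion} reduce their respective convexity property to \emph{one and the same} intermediate condition on the representing function $\phi$, namely that $\phi$ be nondecreasing and convex on $[0,\infty)$. Since $W$ is assumed differentiable, and since every objective and isotropic energy on $\SL(2)$ admits the unique representation $W(F)=\phi\bigl(\lambdamax(F)-\tfrac{1}{\lambdamax(F)}\bigr)$, both propositions are applicable and refer to the same function $\phi$.

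Concretely, I would argue as follows. First I would invoke Proposition~\ref{propFosdick}: because $W$ is objective, isotropic, and differentiable, statement~i) ($W$ rank-one convex) is equivalent to statement~ii) ($\phi$ nondecreasing and convex). This gives the equivalence $\text{i)} \iff \text{ii)}$ of the present proposition. Next I would invoke Proposition~\ref{prop:mielkePolyCriterion}, which (requiring only objectivity and isotropy, hence certainly applicable here) asserts that $\phi$ nondecreasing and convex is equivalent to $W$ being polyconvex in the sense of Definition~\ref{definition:polyconvexityGLSL}~iii). This yields $\text{ii)} \iff \text{iii)}$. Chaining the two equivalences through the common middle condition ii) closes the cycle and establishes that all three statements are equivalent.

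The only subtlety to check is that the function $\phi$ appearing in the two propositions is genuinely the same object. This is immediate from the uniqueness clauses: both propositions construct $\phi$ as \emph{the} unique function on $[0,\infty)$ satisfying $W(F)=\phi\bigl(\lambdamax(F)-\tfrac{1}{\lambdamax(F)}\bigr)$, so uniqueness forces them to coincide. I would also note that the hypotheses match up correctly: Proposition~\ref{propFosdick} demands differentiability (which we assume), whereas Proposition~\ref{prop:mielkePolyCriterion} does not, so no hypothesis is lost in the chain.

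I do not expect any genuine obstacle here, since the proof is essentially a transitivity argument; the substantive work has already been done in proving the two underlying criteria. The main thing to be careful about is bookkeeping: making explicit that condition~ii) of this proposition is verbatim condition~ii) of Proposition~\ref{propFosdick} and condition~i) of Proposition~\ref{prop:mielkePolyCriterion}, so that the two equivalences may be concatenated without gap. The interest of the statement lies not in the difficulty of \emph{this} proof but in the fact that it reduces, in the differentiable case, the nontrivial implication ``rank-one convexity $\Rightarrow$ polyconvexity'' to a comparison of two independently established characterizations; the harder, assumption-free version will require the genuinely new work carried out in the sequel.
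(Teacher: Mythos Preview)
Your proposal is correct and takes essentially the same approach as the paper, which simply states that the result follows directly by comparing Proposition~\ref{propFosdick} and Proposition~\ref{prop:mielkePolyCriterion}. Your explicit verification that the two propositions refer to the same $\phi$ and that the hypotheses line up is a welcome bit of care, but no additional idea is needed.
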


\subsection{The general case}
Our main result of this paper is the equivalence of rank-one convexity and polyconvexity for objective and isotropic energy functions in general, without any regularity assumptions.
The following theorem also provides another geometric interpretation of the criteria from Proposition \ref{propFosdick} and Proposition \ref{prop:mielkePolyCriterion} by Dunn et al.\ and Mielke: the convexity and monotonicity of the function $\phi$ is equivalent to the convexity of the energy with respect to the \emph{amount of shear}. Note that in the planar incompressible case, an energy function is already completely determined by its response to simple shear deformations \cite{abeyaratne1980discontinuous}.\footnote{%
Any plane volume preserving deformation can be decomposed locally into the product of a simple shear in a suitable direction followed or preceded by a suitable rotation. More precisely, for any $F\in \SL(2)$, there exist $Q_1,Q_2\in \OO(2)$ such that
\begin{align}
	F=Q_1\, K\,Q_2\,, \qquad K = \matr{1&\gamma\\0&1}\,, \qquad \gamma=\pm\sqrt{I-2}\,, \qquad I=\norm{F}^2\,.
\end{align}
}
\begin{theorem}\label{mainresultSL2}
	Let $W\colon\SL(2)\to\mathbb{R}$ be an objective and isotropic function. Then the following are equivalent:
	\begin{itemize}
		\item[i)] $W$ is rank-one convex,
		\item[ii)] $W$ is polyconvex,
		\item[iii)] the mapping $\widetilde{\phi}\colon\mathbb{R}\to\mathbb{R}\,,\;\; \widetilde{\phi}(\gamma) = W(\matrs{1&\gamma\\0&1})$ is convex,
		\item[iv)] the function $\phi\colon[0,\infty)\to\mathbb{R}$ with $W(F) = \phi(\sqrt{\norm{F}^2-2})=\phi\Big(\lambdamax(F)-\displaystyle\frac{1}{\lambdamax(F)}\Big)$
		is nondecreasing and convex.
	\end{itemize}
\end{theorem}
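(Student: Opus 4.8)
The plan is to establish the cycle of implications $\text{(i)}\Rightarrow\text{(iii)}\Rightarrow\text{(iv)}\Rightarrow\text{(ii)}\Rightarrow\text{(i)}$ \emph{without} invoking any differentiability, importing the genuinely analytic content from Mielke's criterion (Proposition \ref{prop:mielkePolyCriterion}) and reducing everything else to elementary convexity arguments. The starting point is the observation, available for any objective and isotropic $W$ on $\SL(2)$ with no regularity, that $W(F)$ depends only on the largest singular value (by bi-$\OO(2)$-invariance and the constraint $\lambda_1\lambda_2=1$), so that the representation $W(F)=\phi\bigl(\lambdamax(F)-\tfrac{1}{\lambdamax(F)}\bigr)$ defines a unique $\phi\colon[0,\infty)\to\R$; this is exactly the $\phi$ appearing in (iv) and in Proposition \ref{prop:mielkePolyCriterion}.

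The crucial step, which I would carry out first and which is precisely what lets us dispense with regularity, is $\text{(i)}\Rightarrow\text{(iii)}$. At $F=\id$ the tangent space is $T_{\SL(2)}(\id)=\{H\mid\tr H=0\}$, and the rank-one matrix $e_1\otimes e_2=\matrs{0&1\\0&0}$ is trace-free, hence lies in $T_{\SL(2)}(\id)$. Definition \ref{definition:rankOneConvexitySL} therefore forces $t\mapsto W(\id+t\,e_1\otimes e_2)=W\bigl(\matrs{1&t\\0&1}\bigr)=\widetilde{\phi}(t)$ to be convex on all of $\R$, which is exactly (iii). No smoothness of $W$ enters: rank-one convexity is simply specialized to a single admissible shear line through the identity.

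Next I would prove the elementary equivalence $\text{(iii)}\iff\text{(iv)}$. A direct computation gives $\norm{\matrs{1&\gamma\\0&1}}^2=2+\gamma^2$, so $\widetilde{\phi}(\gamma)=\phi(\sqrt{\gamma^2})=\phi(\abs{\gamma})$; in particular $\widetilde{\phi}$ is even and $\widetilde{\phi}|_{[0,\infty)}=\phi$. If $\phi$ is convex and nondecreasing, then $\widetilde{\phi}=\phi\circ\abs{\cdot}$ is convex as the composition of the convex map $\gamma\mapsto\abs{\gamma}$ with a nondecreasing convex function. Conversely, if $\widetilde{\phi}$ is convex then $\phi=\widetilde{\phi}|_{[0,\infty)}$ is convex, and for $0\le a\le b$, writing $a=\theta(-b)+(1-\theta)b$ with $\theta=\tfrac12(1-a/b)\in[0,\tfrac12]$ and using evenness yields $\phi(a)=\widetilde{\phi}(a)\le\widetilde{\phi}(b)=\phi(b)$, so $\phi$ is nondecreasing.

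Finally, $\text{(iv)}\iff\text{(ii)}$ is Mielke's Proposition \ref{prop:mielkePolyCriterion}, which holds without regularity and supplies all the substantive analysis; this is the one step I would cite rather than reprove. To close the cycle I would verify $\text{(ii)}\Rightarrow\text{(i)}$ directly from Definition \ref{definition:polyconvexityGLSL} iii): polyconvexity of $W$ means $\widetilde{W}(F)=P(\mathbb{M}(F))$ with $P$ convex, where $\mathbb{M}(F)=(F,\det F)$. Along any admissible rank-one segment $F_t=F+t\,\xi\otimes\eta$ in $\SL(2)$ one has $\det F_t\equiv1$, so $\mathbb{M}(F_t)=(F_t,1)$ is affine in $t$, and hence $t\mapsto W(F_t)=P(F_t,1)$ is convex, i.e.\ $W$ is rank-one convex. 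I do not expect a serious obstacle anywhere: the only point demanding care is recognizing that the single specialization in $\text{(i)}\Rightarrow\text{(iii)}$ already extracts the full convexity of the shear response, so that the regularity hypothesis present in the differentiable case (Proposition \ref{equivalenceDiffable}) can be removed entirely.
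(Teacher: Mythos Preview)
Your proposal is correct and follows essentially the same cycle $\text{(i)}\Rightarrow\text{(iii)}\Rightarrow\text{(iv)}\Rightarrow\text{(ii)}\Rightarrow\text{(i)}$ as the paper, with the same key observations (the shear line $\id+t\,e_1\otimes e_2$ for $\text{(i)}\Rightarrow\text{(iii)}$, evenness of $\widetilde\phi$ for monotonicity in $\text{(iii)}\Rightarrow\text{(iv)}$, and Mielke's criterion for $\text{(iv)}\Rightarrow\text{(ii)}$). The only differences are cosmetic: you identify $\widetilde\phi(\gamma)=\phi(\abs{\gamma})$ via the norm computation $\norm{\cdot}^2=2+\gamma^2$ rather than via singular values, you additionally record the (unneeded) reverse implication $\text{(iv)}\Rightarrow\text{(iii)}$, and you spell out $\text{(ii)}\Rightarrow\text{(i)}$ directly instead of citing Dacorogna.
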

\begin{proof}~\\
	i) $\implies$ iii):\\
	We note that
	\[
	\matr{1&\gamma\\0&1} = \id + \gamma\cdot \xi\otimes \eta \qquad\text{ with }\quad
	\xi= \begin{pmatrix}
		1\\0
	\end{pmatrix}\,, \quad
	\eta=\begin{pmatrix}
		0\\1
	\end{pmatrix}\,.
	\]
	 Furthermore, $\det(\id+\gamma\cdot \xi\otimes \eta)=1$ for all $\gamma\in\mathbb{R}$. Thus the rank-one convexity of $W$ implies that the mapping $\gamma\mapsto W(\matrs{1&\gamma\\0&1}) = W(\id+\gamma\cdot \xi\otimes \eta)$ is convex on $\mathbb{R}$.
	
	\vspace*{.7em}\noindent iii) $\implies$ iv):\\
	Let $\phi\colon[0,\infty)\to\mathbb{R}$ denote the uniquely defined function with $W(F)=\phi(\lambdamax(F)-\tfrac{1}{\lambdamax(F)})$ for all $F\in\SL(2)$. We first show that $\phi(t)=\widetilde{\phi}(t)$ for all $t\geq0$: for $\gamma\geq0$, the singular values of the simple shear are
	\begin{align}
		\lambdamax(\matr{1&\gamma\\0&1}) = \frac12\,(\gamma+\sqrt{\gamma^2+4}) \qquad\text{and}\qquad \lambdamin(\matr{1&\gamma\\0&1}) = \frac12\,(-\gamma+\sqrt{\gamma^2+4})\,.
	\end{align}
	Thus we find
	\begin{align}
		\lambdamax(\matr{1&\gamma\\0&1}) - \left(\lambdamax(\matr{1&\gamma\\0&1})\right)^{-1}& = \lambdamax(\matr{1&\gamma\\0&1}) - \lambdamin(\matr{1&\gamma\\0&1})\nonumber\\
		&= \frac{1}{2}\,(\gamma+\sqrt{\gamma^2+4}) - \frac{1}{2}\,(-\gamma+\sqrt{\gamma^2+4}) = \gamma
	\end{align}
	and therefore
	\[
		\widetilde{\phi}(\gamma) = W(\matr{1&\gamma\\0&1}) = \phi\left(\lambdamax(\matr{1&\gamma\\0&1}) - \left(\lambdamax(\matr{1&\gamma\\0&1})\right)^{-1}\right) = \phi(\gamma)
	\]
	for all $\gamma\geq0$.
	
	\vspace*{.35em}
	Since $\widetilde{\phi}$ is convex by assumption of condition iii), it follows that $\phi=\widetilde{\phi}\big|_{[0,\infty)}$ is convex on $[0,\infty)$ as well. Thus it only remains to show that $h=\widetilde{\phi}\big|_{[0,\infty)}$ is also nondecreasing.
	
	Let $0\leq t_1 \leq t_2$. Then $t_1$ lies in the convex hull of $-t_2$ and $t_2$, i.e.\ $t_1 = s(-t_2)+(1-s)\,t_2$ for some $s\in[0,1]$. Since $\widetilde{\phi}$ is convex on $\R$ and $\widetilde{\phi}(-t)=\widetilde{\phi}(t)$ for all $t\in\R$, we thus find %
	\[
		\phi(t_1) = \widetilde{\phi}(t_1)\leq s\,\widetilde{\phi}(-t_2)+(1-s)\,\widetilde{\phi}(t_2) = \widetilde{\phi}(t_2) = \phi(t_2)\,,
	\]
	which shows that $\phi$ is nondecreasing.
	
	\vspace*{.7em}\noindent iv) $\implies$ ii):\\
	Condition iv), i.e.\ the convexity and monotonicity of $\phi$, is exactly condition i) in Proposition \ref{prop:mielkePolyCriterion}, which immediately implies that $W$ is polyconvex. Note that in contrast to Proposition \ref{propFosdick}, Proposition \ref{prop:mielkePolyCriterion} does not require the energy to be differentiable.
	
	\vspace*{.7em}\noindent ii) $\implies$ i):\\
	This implication is well known, see for example \cite[Theorem 5.3]{Dacorogna08}.
\end{proof}
\begin{remark}
	In addition to showing the equivalence between rank-one convexity and polyconvexity on $\SL(2)$, Theorem \ref{mainresultSL2} requires no regularity of the energy and thus improves the known criteria for rank-one convexity on $\SL(2)$.
\end{remark}

\section{\boldmath Functions on $\SL(2)$ and isochoric functions on $\GLp(2)$}
\label{sectisochinc}

Functions on the special linear group $\SL(2)$ are closely connected to so-called \emph{isochoric} function on $\GLp(2)$, i.e.\ functions $\Wiso\col\GLp(2)\to\R$ with $\Wiso(a\,F)=W(F)$ for all $a\in\mathbb{R}^+$. In particular, any isochoric function can be written as \cite{agn_martin2015rank}
\begin{equation}
\label{eq:incompressibleIsochoricRelation}
	\Wiso(F) = \Winc\left( \frac{F}{(\det F)^{1/2}} \right)\,,
\end{equation}
where $\Winc=W\big|_{\SL(2)}$ is the restriction of $\Wiso$ to $\SL(2)$. Furthermore, the relation \eqref{eq:incompressibleIsochoricRelation} describes a \emph{bijection} between the set of isochoric functions and the set of functions on $\SL(2)$. We also note that $\Winc$ is objective/isotropic if and only $\Wiso$ is objective/isotropic.

A result similar to Theorem \ref{mainresultSL2} has previously been shown to hold for isochoric functions \cite{agn_martin2015rank}. In the following, we briefly discuss a failed first attempt to prove Thoerem \ref{mainresultSL2} by using this earlier result, thereby highlighting the difference between convexity properties of isochoric functions and functions on $\SL(2)$.

\begin{proposition}[\cite{agn_martin2015rank}]
	\label{prop:mainResultIsochoric}
	Let $\Wiso\colon\GLp(2)\to\mathbb{R}$ be an objective, isotropic and isochoric function, i.e.
	\[
		\Wiso(a\,F)=\Wiso(F) \quad\text{ for all }\; a\in\mathbb{R}^+\colonequals(0,\infty)\,,
	\]
	and let $g\colon\mathbb{R}^+\times\mathbb{R}^+\to\mathbb{R}\,,\; h\colon\mathbb{R}^+\to\mathbb{R}$ denote the uniquely determined functions with
	\begin{align}\label{defh}
		\Wiso(F)=g(\lambda_1,\lambda_2)=h\left(\frac{\lambda_1}{\lambda_2}\right)=h\left(\frac{\lambda_2}{\lambda_1}\right)
	\end{align}
	for all $F\in\GLp(2)$ with singular values $\lambda_1,\lambda_2$. Then the following are equivalent:
	\begin{itemize}
		\item[i)] $\Wiso$ is polyconvex.
		\item[ii)] $\Wiso$ is rank-one convex,
		\item[iii)] $g$ is separately convex,
		\item[iv)] $h$ is convex on $\mathbb{R}^+$,
		\item[v)] $h$ is convex and non-decreasing on $[1,\infty)$.
	\end{itemize}
\end{proposition}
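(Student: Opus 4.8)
The plan is to prove the cycle of implications
\[
\text{(i)} \;\Rightarrow\; \text{(ii)} \;\Rightarrow\; \text{(iii)} \;\Leftrightarrow\; \text{(iv)} \;\Leftrightarrow\; \text{(v)} \;\Rightarrow\; \text{(i)}\,,
\]
working with the isochoric energy $\Wiso$ directly on $\GLp(2)$ rather than transporting the statement to $\SL(2)$ via Theorem \ref{mainresultSL2}. The reason to avoid the latter route is instructive: the bijection $\Wiso(F)=\Winc(F/(\det F)^{1/2})$ identifies the argument $s\colonequals\lambdamax/\lambdamin$ of $h$ with the shear variable $\sqrt{s}-1/\sqrt{s}$ of the $\SL(2)$-representation $\phi$ from Proposition \ref{prop:mielkePolyCriterion}. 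Since $s\mapsto\sqrt{s}-1/\sqrt{s}$ is concave rather than affine, convexity of $\phi$ and convexity of $h$ cannot be interchanged by a one-line composition argument; this is exactly the discrepancy between the $\SL(2)$- and isochoric settings that the paper emphasizes, so a self-contained argument is cleaner.

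The implication (i)~$\Rightarrow$~(ii) is the standard fact that polyconvexity implies rank-one convexity. For (ii)~$\Rightarrow$~(iii), I use isotropy to assume $F=\diag(\lambda_1,\lambda_2)$ and test rank-one convexity along the line $t\mapsto F+t\,\xi\otimes\eta$ with $\xi=\eta=e_1$; this keeps the matrix diagonal and in $\GLp(2)$ for $t>-\lambda_1$, so rank-one convexity yields convexity of $\lambda_1\mapsto g(\lambda_1,\lambda_2)$ on $(0,\infty)$, and the symmetry $g(\lambda_1,\lambda_2)=g(\lambda_2,\lambda_1)$ gives separate convexity in the second slot as well. The equivalence (iii)~$\Leftrightarrow$~(iv) is then immediate: fixing $\lambda_2=c$, the map $\lambda_1\mapsto g(\lambda_1,c)=h(\lambda_1/c)$ is an affine reparametrization of $h$, so its convexity is equivalent to convexity of $h$ on $\mathbb{R}^+$, the second slot being handled by the symmetry $h(x)=h(1/x)$ encoded in \eqref{defh}.

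For (iv)~$\Leftrightarrow$~(v) the key structural input is again the symmetry $h(x)=h(1/x)$. If $h$ is convex on $\mathbb{R}^+$, then for any $t$ the value $1$ lies between $t$ and $1/t$, whence $h(1)\leq h(t)$; thus $t=1$ is a global minimizer, and a convex function is nondecreasing to the right of its minimizer, giving (v). Conversely, assuming (v), the map $t\mapsto h(1/t)=h(t)$ is convex on $(0,1]$, since $t\mapsto 1/t$ is convex there with image $[1,\infty)$ and $h$ is convex and nondecreasing on that image; convexity across the junction $t=1$ then follows from the relation $h'_-(1)=-h'_+(1)\leq 0\leq h'_+(1)$ between the one-sided derivatives (a consequence of differentiating the symmetry), so $h$ is convex on all of $\mathbb{R}^+$.

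The main obstacle is (v)~$\Rightarrow$~(i), where I must exhibit a convex $P\colon\mathbb{R}^{2\times2}\times\mathbb{R}\to\mathbb{R}\cup\{\infty\}$ with $\Wiso(F)=P(F,\det F)$. The construction I propose is
\[
P(X,d)=\bar h\!\left(\frac{\lambdamax(X)^2}{d}\right)\ \text{ for }d>0\,,\qquad P(X,d)=+\infty\ \text{ for }d\leq 0\,,
\]
where $\bar h(t)=h(\max\{t,1\})$ is the nondecreasing convex extension of $h\big|_{[1,\infty)}$ to $\mathbb{R}^+$. The spectral norm $\lambdamax(X)$ is convex and nonnegative, so the quadratic-over-linear combination $(X,d)\mapsto\lambdamax(X)^2/d$ is the perspective of a convex function and is therefore jointly convex on $\{d>0\}$; composing with the convex nondecreasing $\bar h$ keeps $P$ convex, and setting $P=+\infty$ on the convex set $\{d\leq0\}$ preserves convexity. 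On the constraint surface $d=\det X$ one has $\lambdamax(X)^2/\det X=\lambdamax/\lambdamin\geq 1$, so $\bar h$ agrees with $h$ there and $P(F,\det F)=h(\lambdamax/\lambdamin)=\Wiso(F)$, which is polyconvexity in the sense of Definition \ref{definition:polyconvexityGLSL}~ii). The genuinely creative step is recognizing that the correct convex building block is the perspective combination $\lambdamax^2/\det$, together with the observation that only condition (v) — and not full monotonicity of $h$ on all of $\mathbb{R}^+$ — is needed once $h$ is suitably extended.
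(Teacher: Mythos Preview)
This proposition is quoted from the authors' earlier paper \cite{agn_martin2015rank} and is not proved in the present text, so there is no in-paper argument to compare your attempt against.

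That said, your proof is correct and self-contained. The chain (i)$\Rightarrow$(ii)$\Rightarrow$(iii)$\Leftrightarrow$(iv)$\Leftrightarrow$(v) uses only the standard implication polyconvex $\Rightarrow$ rank-one convex, the diagonal rank-one test, affine reparametrization, and the symmetry $h(x)=h(1/x)$; the gluing of the two convex pieces of $h$ at $t=1$ via $h'_-(1)=-h'_+(1)\leq h'_+(1)$ is the right way to handle the junction without assuming differentiability. The decisive step (v)$\Rightarrow$(i) is also sound: $\lambdamax(\cdot)$ is the operator norm, so $\lambdamax(\cdot)^2$ is convex and its perspective $(X,d)\mapsto d\,\lambdamax(X/d)^2=\lambdamax(X)^2/d$ is jointly convex on $\{d>0\}$; composing with the nondecreasing convex extension $\bar h(t)=h(\max\{t,1\})$ keeps $P$ convex, the extension by $+\infty$ on the convex set $\{d\leq0\}$ causes no trouble since any segment with an endpoint there has infinite right-hand side, and on $\GLp(2)$ one has $\lambdamax(F)^2/\det F=\lambdamax/\lambdamin\geq1$, whence $P(F,\det F)=h(\lambdamax/\lambdamin)=\Wiso(F)$ as required by Definition~\ref{definition:polyconvexityGLSL}~ii). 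Your opening remark about why one cannot simply transport Theorem~\ref{mainresultSL2} back through the bijection \eqref{eq:incompressibleIsochoricRelation} is exactly the point the paper makes in Remark~\ref{remarkcounter}.
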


Of course, in order to show the equivalence of rank-one convexity and polyconvexity for functions on $\SL(2)$, one might attempt to combine Proposition \ref{prop:mainResultIsochoric} with the relation \eqref{eq:incompressibleIsochoricRelation}.

This approach, which is visualized in Fig.\ \ref{figure:SLapproachVisualization}, can be summarized as follows: we would like to show that the rank-one convexity of $\Winc$ implies the rank-one convexity of $\Wiso$ (in the notation of \eqref{eq:incompressibleIsochoricRelation}). If this was the case, then we could apply Theorem \ref{prop:mainResultIsochoric} to show that $\Wiso$ is polyconvex, and thus $\Winc$ is polyconvex as the restriction of the polyconvex function $\Wiso$ to $\SL(2)$, cf.\ Definition \ref{definition:polyconvexityGLSL}.

\begin{figure}[h]
	\begin{center}
		\tikzsetnextfilename{SLconvexityApproachVisualization}
		\begin{tikzpicture}
			\input{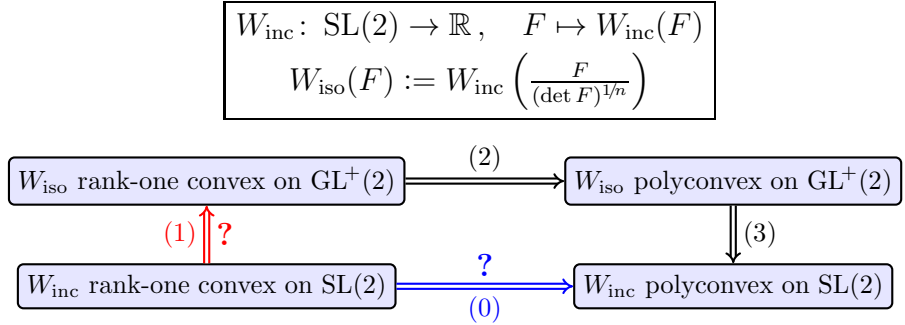}
		\end{tikzpicture}
	\end{center}
	\caption{\label{figure:SLapproachVisualization} A visualization of our first approach: implications (2) and (3) hold (see Proposition \ref{remarkisoSL} and Proposition \ref{prop:mainResultIsochoric}), whereas it turns out from Remark \ref{remarkcounter} that implication \textcolor{red}{(1)} does not hold in general. Implication \textcolor{blue}{(0)} is the main result of this article (Theorem \ref{mainresultSL2})}
\end{figure}

However, this approach turned out not to be viable: although the rank-one convexity of $\Wiso$ implies the rank-one convexity of $\Winc$, the reverse is not true in general.

\begin{proposition}
\label{remarkisoSL}
	Let $\Wiso\colon\GLp(2)\to\mathbb{R}$ be an objective, isotropic and isochoric function. Then rank-one convexity (equivalently polyconvexity) of $\Wiso$ on $\GLp(2)$ implies rank-one convexity (equivalently polyconvexity) of $\Winc\colon\SL(2)\to\mathbb{R}$ on $\SL(2)$. The reverse implication does not hold in general.
\end{proposition}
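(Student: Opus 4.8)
The statement has two parts, which I would treat separately. For the forward implication the plan is to exploit the fact that $\Winc$ is literally the restriction of $\Wiso$ to $\SL(2)$: by \eqref{eq:incompressibleIsochoricRelation} we have $\Wiso(F)=\Winc(F)$ whenever $\det F=1$. So I would fix $F\in\SL(2)$ and $\xi,\eta\in\R^2$ with $\xi\otimes\eta\in T_{\SL(2)}(F)$; as noted after \eqref{tangentspace}, the entire affine line $t\mapsto F+t\,\xi\otimes\eta$ then lies in $\SL(2)\subset\GLp(2)$. Applying Ball's definition of rank-one convexity of $\Wiso$ on $\GLp(2)$ to each sub-segment of this line (whose endpoints differ by a rank-one matrix and which stays inside $\GLp(2)$) yields the three-point convexity inequality for $t\mapsto\Wiso(F+t\,\xi\otimes\eta)$ on all of $\R$. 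Since $\Wiso$ and $\Winc$ agree along this line, the map $t\mapsto\Winc(F+t\,\xi\otimes\eta)$ is convex on $\R$, which is exactly rank-one convexity of $\Winc$ in the sense of Definition \ref{definition:rankOneConvexitySL}. The parenthetical equivalences with polyconvexity then follow from Proposition \ref{prop:mainResultIsochoric} and Theorem \ref{mainresultSL2}.

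For the reverse implication the plan is to produce an explicit counterexample by comparing the two representation functions. Writing $\phi$ for the $\SL(2)$-representation of $\Winc$ (so $\Winc(F)=\phi(\lambdamax-\tfrac{1}{\lambdamax})$) and $h$ for the isochoric representation of $\Wiso$ (so $\Wiso(F)=h(\lambda_1/\lambda_2)$), I would first compute their relation. Since $\hat{F}=F/(\det F)^{1/2}$ has singular values $\sqrt{\lambdamax/\lambdamin}$ and $\sqrt{\lambdamin/\lambdamax}$, formula \eqref{eq:incompressibleIsochoricRelation} gives, for $r=\lambdamax/\lambdamin\geq1$,
\[
	h(r) = \phi\!\left(\sqrt{r}-\frac{1}{\sqrt{r}}\right).
\]
The decisive observation is that the inner map $r\mapsto\sqrt{r}-1/\sqrt{r}$ is increasing but strictly concave on $[1,\infty)$, so that a nondecreasing, convex $\phi$ need not produce a convex $h$: the concavity of the inner map can dominate.

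To make this concrete I would take $\phi(t)=t$, i.e.\ $\Winc(F)=\sqrt{\norm{F}^2-2}=\lambdamax-\tfrac{1}{\lambdamax}$. This $\phi$ is nondecreasing and convex, so $\Winc$ is rank-one convex (indeed polyconvex) by Proposition \ref{propFosdick} and Theorem \ref{mainresultSL2}. The associated isochoric extension then has representation $h(r)=\sqrt{r}-1/\sqrt{r}$, which is strictly concave (hence not convex) on $[1,\infty)$; by Proposition \ref{prop:mainResultIsochoric} this shows that $\Wiso$ fails to be rank-one convex on $\GLp(2)$. One checks directly that this $\Wiso(F)=\sqrt{\lambdamax/\lambdamin}-\sqrt{\lambdamin/\lambdamax}$ is indeed objective, isotropic and isochoric, so it is an admissible counterexample.

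The forward implication is essentially immediate once one recognizes $\Winc$ as a restriction of $\Wiso$; the only point requiring care is passing from Ball's segment-wise definition on $\GLp(2)$ to convexity on all of $\R$, which is routine since the entire rank-one line stays inside $\SL(2)$. The real content lies in the reverse direction, and the main obstacle there is identifying the correct comparison between the representation functions $\phi$ and $h$ and recognizing that the dimensional-reduction map $r\mapsto\sqrt{r}-1/\sqrt{r}$ is concave; this concavity is precisely what breaks the transfer of convexity from $\Winc$ back up to $\Wiso$.
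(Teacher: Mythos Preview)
Your argument is correct. The counterexample you construct for the reverse implication is exactly the one the paper uses in Remark \ref{remarkcounter}: writing $\Wiso(F)=\sqrt{\lambdamax/\lambdamin}-\sqrt{\lambdamin/\lambdamax}$ is the same as the paper's $\bigl|\sqrt{\lambda_1/\lambda_2}-\sqrt{\lambda_2/\lambda_1}\,\bigr|$, and both of you verify $\phi(s)=s$ on the $\SL(2)$ side and non-convexity of $h$ on the isochoric side. Your derivation of the relation $h(r)=\phi(\sqrt{r}-1/\sqrt{r})$ and the observation that the inner map is concave is a nice structural explanation of \emph{why} this counterexample exists; the paper simply presents the example and checks it.

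For the forward implication you take a genuinely different and more elementary route. The paper argues through the representation functions: rank-one convexity of $\Wiso$ gives, via Proposition \ref{prop:mainResultIsochoric}, that $h$ is convex and nondecreasing on $[1,\infty)$; it then inverts your relation to write $\phi(\theta)=h\bigl((\theta+\sqrt{\theta^2+4})^2/4\bigr)$, notes that the inner map is convex and increasing, and concludes that $\phi$ is convex and nondecreasing, whence Theorem \ref{mainresultSL2} applies. Your direct restriction argument bypasses all of this: since the admissible rank-one line through $F\in\SL(2)$ stays in $\SL(2)\subset\GLp(2)$, convexity along it is inherited immediately from rank-one convexity of $\Wiso$. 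This is shorter and uses neither Proposition \ref{prop:mainResultIsochoric} nor the representation machinery; the paper's route, in exchange, makes the duality with the counterexample transparent (convexity of one inner map versus concavity of its inverse).
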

\begin{proof}
	Since $\Wiso\colon\GLp(2)\to\mathbb{R}$ is an objective, isotropic, isochoric rank-one convex function on $\GLp(2)$, the unique function \hbox{$h\colon\mathbb{R}^+\to\mathbb{R}$\;} satisfying \eqref{defh} is convex and non-decreasing on $[1,\infty)$. For all $F\in \SL(2)$,
	\begin{align}\label{isoinc}
		h\left(\lambdamax^2(F)\right)=h\left(\frac{\lambdamax(F)}{\lambdamin(F)}\right)=\Wiso(F)=\Winc(F) =\phi\Big(\lambdamax(F)-\frac{1}{\lambdamax(F)}\Big)\,,
	\end{align}
	where $\lambdamax(F)$ is the largest singular value of $F$ and $\phi\colon[0,\infty)\to\mathbb{R}$ is the unique function such that the last equality of \eqref{isoinc} holds. Therefore
	\begin{align}
		\phi(\theta)=h\left(\frac{(\theta+\sqrt{\theta+4})^2}{4}\right) \quad\text{ for all }\; \theta\geq 0\,.
	\end{align}
	Since the mapping $\theta\mapsto \frac{(\theta+\sqrt{\theta+4})^2}{4}$ is a convex from $[0,\infty)$ to $[1,\infty)$  and \hbox{$h\colon\mathbb{R}^+\to\mathbb{R}$\;} is convex and non-decreasing on $[1,\infty)$, the function $\phi\colon[0,\infty)\to\mathbb{R}$ is nondecreasing and convex. Thus Theorem \ref{mainresultSL2} yields the polyconvexity and rank-one convexity of the function $\Winc=W\big|_{\SL(2)}$. For the second part of the proof, we refer to Remark \ref{remarkcounter} for a counterexample.
\end{proof}

\begin{remark}[Counterexample to the above approach]
\label{remarkcounter}
	Consider the function $\Wiso\colon\GLp(2)\to\mathbb{R}$ with
	\[
		\Wiso(F) = \bigg\lvert\sqrt{\frac{\lambda_1}{\lambda_2}} - \sqrt{\frac{\lambda_2}{\lambda_1}}\;\bigg\rvert
	\]
	for all $F\in\GLp(2)$ with singular values $\lambda_1,\lambda_2\in\mathbb{R}^+$. Then
	\begin{itemize}
		\item[i)] $\Wiso$ is objective, isotropic and isochoric on $\GLp(2)$,
		\item[ii)] $\Wiso$ is \emph{not} rank-one convex on $\GLp(2)$,
		\item[iii)] the restriction $\Winc=\Wiso\big|_{\SL(2)}$ of $\Wiso$ to $\SL(2)$ is polyconvex and rank-one convex on $\SL(2)$.
	\end{itemize}
\end{remark}
\begin{proof}
	In order to show i), it suffices to remark that
	\[
		\Wiso(F) = h\left(\frac{\lambda_1}{\lambda_2}\right) \quad\text{ with }\quad h(t) = \bigg\lvert\sqrt{t}-\sqrt{\frac1t}\;\bigg\rvert
	\]
	for all $F\in\GLp(2)$ with singular values $\lambda_1,\lambda_2$. Thus $\Wiso$ is objective, isotropic and isochoric, and according to Theorem \ref{prop:mainResultIsochoric}, $\Wiso$ is rank-one convex if and only if $h$ is convex and non-decreasing on $[1,\infty)$. Since
	\[
		h''(t) = -\frac14\,t^{-\frac32} - \frac34\, t^{-\frac52}
	\]
	for all $t>1$, $h$ is not convex, which proves ii).
	
	It remains to show iii), i.e.\ that the restriction $\Winc=\Wiso\big|_{\SL(2)}$ of $\Wiso$ to $\SL(2)$ is ($\SL$-)polyconvex. We first give an explicit representation of $\Winc$: let $F\in\SL(2)$. Then $\frac{1}{\lambda_2}=\lambda_1\equalscolon\lambda$, thus
	\begin{align*}
		\Winc(F) &= \Wiso(F)= \bigg\lvert\sqrt{\frac{\lambda_1}{\lambda_2}} - \sqrt{\frac{\lambda_2}{\lambda_1}}\;\bigg\rvert= \Bigg\lvert\sqrt{\frac{\lambda}{\frac1\lambda}} - \sqrt{\frac{\frac1\lambda}{\lambda}}\;\Bigg\rvert= \bigg\lvert\sqrt{\lambda^2} - \sqrt{\frac{1}{\lambda^2}}\;\bigg\rvert= \Big\lvert\lambda - \frac1\lambda\Big\rvert\\
		&= \max\{\lambda,\tfrac1\lambda\} - \min\{\lambda,\tfrac1\lambda\}
		= \max\{\lambda_1,\lambda_2\} - \min\{\lambda_1,\lambda_2\}
		= \lambdamax - \frac{1}{\lambdamax}
		= \phi(\lambdamax-\tfrac{1}{\lambdamax})\,,
	\end{align*}
	where $\lambdamax = \max\{\lambda_1,\lambda_2\} = \max\{\lambda,\tfrac1\lambda\}$ and $\phi\colon[0,\infty)\to\mathbb{R}$ is defined by $\phi(s) = s$. According to Proposition \ref{prop:mielkePolyCriterion}, a function $W\colon\SL(2)\to\mathbb{R}$ with $W(F) = \phi(\lambdamax-\tfrac{1}{\lambdamax})$ for all $F\in\SL(2)$ with maximum singular value $\lambdamax$ is polyconvex if and only if $\phi$ is non-decreasing and convex on $[0,\infty)$. Since the mapping $\phi$ obviously satisfies these conditions, the function $\Winc$ is polyconvex on $\SL(2)$ and thus rank-one convex \cite[Theorem 5.3]{Dacorogna08} (cf.\ Theorem \ref{mainresultSL2}).
\end{proof}

\section{Outlook}

We finish with some open questions: consider an energy $W\colon\GLp(2)\to \mathbb{R}$ with volumetric-isochoric split%
\footnote{%
	The applications of a split of the form \eqref{eq:volIsoSplit} in the anisotropic case has been discussed by Sansour \cite{sansour2008physical}. Sansour's statement for isotropy is already contained in an 1948 article by Richter \cite[page 209]{richter1948isotrope}.
}%
\begin{equation}
\label{eq:volIsoSplit}
	W(F)=\Wiso\bigg(\frac{F}{\det F^{1/2}}\bigg)+\Wvol(\det F)\,.
\end{equation}
Such a split is relevant for slightly compressible materials like vulcanized rubber, cf.\ \cite{annaidh2012deficiencies,horgan2009constitutive,NeffGhibaLankeit,NeffGhibaPoly,ndanou2014criterion}. It is an open question whether rank-one convexity implies polyconvexity for an energy $W$ of this type. There might also be additional restrictions on the isochoric part $\Wiso(\frac{F}{\det F^{1/2}})$ and the volumetric part $\Wvol(\det F)$ which assure that the implication holds.

A related question is whether the rank-one convexity of the total energy $W$ implies the rank-one convexity of the individual parts $\Wiso(\frac{F}{\det F^{1/2}})$ and $\Wvol(\det F)$, respectively. In particular, this would imply (due to Theorem \ref{mainresultSL2} and Proposition \ref{prop:mainResultIsochoric}) that rank-one convexity is equivalent to polyconvexity for energy functions of the form \eqref{eq:volIsoSplit}.

\section*{Acknowledgement}
The interest in studying the equivalence between rank-one convexity and polyconvexity on $\SL(2)$ arose after an insightful question by Sir John Ball (Oxford Centre for Nonlinear Partial Differential Equations) regarding our previous paper \cite{agn_martin2015rank}. Discussions with Prof.\ Alexander Mielke (Weierstrass Institute for Applied Analysis and Stochastics, Berlin) regarding this topic are also gratefully acknowledged.

\begin{footnotesize}
\printbibliography[heading=bibnumbered]
\end{footnotesize}

\newpage
\appendix 
{\footnotesize

\section{\boldmath An alternative proof of a rank-one convexity criterion for twice-differentiable functions on $\SL(2)$}
\label{Appendix1}

In this appendix, we will give an alternative proof of the rank-one convexity criterion stated in Proposition \ref{Abeyaratne}.
We assume that the energy $W\col\SL(2)\to\R$ is twice-differentiable. In this case, rank-one convexity is equivalent to \emph{Legendre-Hadamard ellipticity} on $\SL(2)$:
\begin{equation}
	D^2_F W(F)(\xi\otimes\eta,\xi\otimes\eta)\geq0 \quad\text{ for all }\; F\in \SL(2) \;\text{ and }\; \xi,\eta\in\mathbb{R}^2 \;\text{ with }\; \xi\otimes \eta\in T_{\SL(2)}(F) \,.
\end{equation}
The Legendre-Hadamard ellipticity condition can equivalently be stated as
\begin{align}\label{LH2}
	\langle Q(F,\eta)\, \xi,\, \xi\rangle \geq 0 \quad\text{ for all }\;F\in \SL(2) \;\text{ and }\; \xi,\eta\in\mathbb{R}^2 \;\text{ such that }\; \langle F^{-T} \eta,\, \xi\rangle=0\,,
\end{align}
where the acoustic tensor $Q=(Q_{\alpha\gamma})_{{\alpha\gamma}}$ is defined by
\begin{align}
	Q_{\alpha\gamma}(F,\eta) = \frac{\partial^2 W(F)}{\partial F_{\alpha\beta}\, \partial F_{\gamma\delta}}\, \eta_\beta\eta_\delta\,.
\end{align}
Here, we employ the Einstein summation convention for Greek subscripts (which take the values $1,2$).

Note that for $\eta=0$, the Legendre-Hadamard ellipticity condition is satisfied for all $F\in \SL(2)$ and all $\xi\in \mathbb{R}^2$.
Hence we may assume that $\eta$ is a unit vector. Note also that for all $ \eta\in\mathbb{R}^2\setminus\{0\}$ with $\norm{\eta}=1$ and all $F\in \SL(2)$,
\begin{align}
	\langle F^{-T} \eta,\, \xi\rangle=0 \quad\iff\quad \xi=a\, \epsilon\, F^{-T}\eta \quad\text{for some }\; \ a\in \mathbb{R}\,,
\end{align}
where
\begin{align}
	\epsilon \colonequals \matr{0&1\\-1&0}
\end{align}
is the two-dimensional alternator. Since $F^{-1}=\epsilon^T\, F\, \epsilon$ for $F\in \SL(2)$ and $\epsilon\, \epsilon^T=\id$, we find that condition \eqref{LH2} is equivalent to
\begin{align}\label{LHps}
	\langle Q(F,\eta)\, F^T\, \epsilon\,\eta, F^T \epsilon\, \; \eta\rangle \geq 0 \quad\text{for all }\; F\in \SL(2) \;\text{ and }\; \eta\in\mathbb{R}^2\setminus\{0\} \;\text{ with }\; \norm{\eta}=1\,,
\end{align}
which can be written in terms of the components as
\begin{align}
	\epsilon_{\alpha\lambda}\epsilon_{\beta\mu}F_{\gamma\lambda}F_{\delta \mu}Q_{\gamma\delta}\eta_\alpha\eta_\beta\geq 0 \quad\text{for all }\; F\in \SL(2) \;\text{ and }\; \eta\in\mathbb{R}^2\setminus\{0\} \;\text{ with }\; \norm{\eta}=1\,.
\end{align}
Now consider the fourth order elasticity tensor defined by
\begin{align}
	\mathbb{C}_{\alpha\beta\gamma\delta}=\frac{\partial^2 W(F)}{\partial F_{\alpha\beta}\partial F_{\gamma\delta}}\,.
\end{align}
The Legendre-Hadamard condition on $\SL(2)$ is equivalent to
\begin{align}\label{LHpsc2}
	\langle \mathbb{C}.\, ( F^T\, \epsilon\,\eta)\otimes \eta,\; ( F^T\, \epsilon\,\eta)\otimes \eta\rangle \geq 0 \quad\text{for all }\; F\in \SL(2) \;\text{ and }\; \eta\in\mathbb{R}^2\setminus\{0\} \;\text{ with }\; \norm{\eta}=1\,.
\end{align}
For objective and isotropic energies $W\colon\SL(2)\to \mathbb{R}$ given by
\begin{align}
	W(F) = \psi(I),\qquad I=\norm{F}^2=\lambdamax^2(F)+\frac{1}{\lambdamax^2(F)}\,,%
\end{align}
we find
\begin{align}
	\frac{\partial W(F)}{\partial F_{\alpha\beta}}=2\, F_{\alpha\beta}\, \psi^{\prime}(I)\,,
	\qquad
	\mathbb{C}_{\alpha\beta\gamma\delta} = 4\, F_{\alpha\beta}\, F_{\gamma\delta}\, \psi^{\prime\prime}(I)+2\, \delta_{\alpha\gamma}\,\delta_{\beta\delta} \,\psi^{\prime}(I)\,,
\end{align}
and the acoustic tensor is given by 
\begin{align}
	Q_{\alpha\gamma}=\mathbb{C}_{\alpha\beta\gamma\delta}\;\eta_\beta\eta_\delta &= 4\, F_{\alpha\beta}\, F_{\gamma\delta}\,\eta_\beta\,\eta_\delta\, \psi^{\prime\prime}(I)+2\, \delta_{\alpha\gamma}\,\delta_{\beta\delta} \,\eta_\beta\,\eta_\delta\,\psi^{\prime}(I)\nonumber\\
	&= 4\, F_{\alpha\beta}\,\eta_\beta\, F_{\gamma\delta}\eta_\delta\, \psi^{\prime\prime}(I)+2\, \delta_{\alpha\gamma}\,\eta_\delta\,\eta_\delta\,\psi^{\prime}(I).
\end{align}
Hence, after some calculations (cf.\ \cite{abeyaratne1980discontinuous}), condition \eqref{LHps} becomes
\[
	(\epsilon_{\alpha\lambda}\epsilon_{\beta\mu}C_{\alpha\beta}\; \eta_\lambda\eta_\mu)\, \psi^{\prime}(I)+2(\epsilon_{\alpha\lambda}C_{\alpha \rho}\; \eta_\rho\eta_\lambda)^2 \psi^{\prime\prime}(I)\geq 0 \quad\text{ for all }\; F\in \SL(2)\,,\;\; \eta\in\mathbb{R}^2\setminus\{0\}\,,\; \norm{\eta}=1\,,
\]
where $C=F^T F$. Due to isotropy, we can assume without loss of generality that
\begin{align}
	C=\matr{\lambda_1^2&0\\0&\lambda_2^2},\,
\end{align}
thus we obtain the condition
\begin{alignat}{2}
\label{LHlambda}
	&(\lambda_1^2\eta_2^2+\lambda_2^2\,\eta_1^2)^2\,\, \psi^{\prime}(I) + 2(\lambda_1^2-\lambda_2^2)^2\;\eta_1^2\eta^2_2\, \psi^{\prime\prime}(I)\geq 0 \quad&&\text{ for all }\; F\in \SL(2)\,,\;\; \eta\in\mathbb{R}^2\setminus\{0\}\,,\;\; \norm{\eta}=1\,,\\
\intertext{which in turn is equivalent to}
	&\mathrlap{[\lambda_2^2\, \psi^\prime(I)]\, \eta_1^4+[\lambda_1^2\, \psi^\prime(I)]\, \eta_2^4+[(\lambda_1^2+\lambda_2^2)\psi^\prime(I)+2(\lambda_1^2-\lambda_2^2)^2\,\psi^{\prime\prime}(I)]\,\eta_1^2\,\eta_2^2 \,\geq\, 0}&&\\
	&&&\text{ for all }\; F\in \SL(2)\,,\;\; \eta\in\mathbb{R}^2\setminus\{0\}\,,\;\; \norm{\eta}=1\,.\nonumber
\end{alignat}
Using the notation \cite{abeyaratne1980discontinuous}
\[
	E_{11}=\lambda_2^2\, \psi^\prime(I), \quad E_{22}=\lambda_1^2\, \psi^\prime(I),\quad E_{12}=E_{21}=\frac{1}{2}[(\lambda_1^2+\lambda_2^2)\psi^\prime(I)+2(\lambda_1^2-\lambda_2^2)^2\,\psi^{\prime\prime}(I)]\,,
\]
condition \eqref{LHlambda} can be written as
\begin{equation}
\label{eq:almostPosSemiDef}
	\langle E\, \zeta,\, \zeta\rangle\geq 0 \quad\text{ for all }\; \zeta\in\mathbb{R}^2_+\,.
\end{equation}
Note carefully that \eqref{eq:almostPosSemiDef} does not imply that $E$ is positive semi-definite, since the inequality needs to hold only for $\zeta\in\mathbb{R}^2_+$ and not for all $ \zeta\in\mathbb{R}^2$.
Instead, it is easy to see that condition \eqref{eq:almostPosSemiDef} holds if and only if
\begin{equation}
	E_{11}\geq0 \quad\text{and}\quad E_{22}\geq0 \quad\text{and}\quad \big[ E_{12}<0 \;\implies\; \det E \geq 0 \big]\,,
\end{equation}
which, after some computation, can be stated as
\begin{align}
	\psi^\prime(I)\geq 0 \quad\text{and}\quad \Big[(\lambda_1^2+\lambda_2^2)\,\psi'(I) + 2(\lambda_1^2-\lambda_2^2)^2\,\psi''(I) < 0 \;&\implies\; (\lambda_1+\lambda_2)^2\, \Psi'(I) + 2(\lambda_1^2-\lambda_2^2)^2\, \Psi''(I) \geq 0 \Big]\,. \label{eq:detImplication}
\intertext{Since under the assumption $\psi^\prime(I)\geq 0$, the implication}
	(\lambda_1^2+\lambda_2^2)\,\psi'(I) + 2(\lambda_1^2-\lambda_2^2)^2\,\psi''(I) \geq 0 \;&\implies\; (\lambda_1+\lambda_2)^2\, \Psi'(I) + 2(\lambda_1^2-\lambda_2^2)^2\, \Psi''(I) \geq 0 \nonumber
\end{align}
holds in general, we conclude that condition \eqref{eq:detImplication} (and thus the Legendre-Hadamard ellipticity of $W$) is equivalent to
\[
	\psi^\prime(I)\geq 0 \quad\text{and}\quad (\lambda_1+\lambda_2)^2\, \Psi'(I) + 2(\lambda_1^2-\lambda_2^2)^2\, \Psi''(I) \geq 0\,,
\]
which we can write as
\begin{align}\label{ineqW3}
	\psi^\prime(I)&\geq 0\,, \qquad \psi^\prime(I)+2(\lambda_1-\lambda_2)^2\, \psi^{\prime\prime}(I)\geq 0\,. \nonumber
\end{align}
Finally, in terms of $I$, this can be expressed as
\begin{align}
	\psi^\prime(I)&\geq 0\,, \qquad \psi^\prime(I)+2(I-2)\, \psi^{\prime\prime}(I)\geq 0\,, \nonumber
\end{align}
which is the criterion for rank-one convexity given in Proposition \ref{propFosdick} and Proposition \ref{Abeyaratne}.
}
\end{document}